 \newtheorem{thm}{Theorem}[section]
 \newtheorem{cor}[thm]{Corollary}
 \newtheorem{prop}[thm]{Proposition}
 \theoremstyle{definition}
 \newtheorem{defn}[thm]{Definition}
 \theoremstyle{remark}
 \numberwithin{equation}{section}
\begin{document}

%-------------------------------------------------------------------------
% editorial commands: to be inserted by the editorial office
%
%\firstpage{1} \volume{228} \Copyrightyear{2004} \DOI{003-0001}
%
%
%\seriesextra{Just an add-on}
%\seriesextraline{This is the Concrete Title of this Book\br H.E. R and S.T.C. W, Eds.}
%
% for journals:
%
%\firstpage{1}
%\issuenumber{1}
%\Volumeandyear{1 (2004)}
%\Copyrightyear{2004}
%\DOI{003-xxxx-y}
%\Signet
%\commby{inhouse}
%\submitted{March 14, 2003}
%\received{March 16, 2000}
%\revised{June 1, 2000}
%\accepted{July 22, 2000}
%
%
%
%---------------------------------------------------------------------------
%Insert here the title, affiliations and abstract:
%

\title[The Relation Between Automorphism Group and Isometry Group]
 {The Relation Between Automorphism Group and Isometry Group of Randers Lie Groups}

%----------Author 1
\author[H. R. Salimi Moghaddam]{Hamid Reza Salimi Moghaddam}

\address{%
Department of Mathematics, Faculty of  Sciences, University of Isfahan, Isfahan,81746-73441-Iran.}

\email{salimi.moghaddam@gmail.com and hr.salimi@sci.ui.ac.ir\\
Received: \\
Revised: }

\thanks{This work was supported by the research grant from Shahrood
university of technology.}

%----------classification, keywords, date
\subjclass{Primary 58B20; Secondary 53C60; 22E15}

\keywords{simply connected Lie group, left invariant Randers metric, automorphism group, isometry group}

\date{October 26, 2010}
%----------additions
%\dedicatory{To my boss}
%%% ----------------------------------------------------------------------

\begin{abstract}
In this paper we consider simply connected Lie groups equipped with left invariant Randers metrics which arise from
left invariant Riemannian metrics and left invariant vector fields. Then we study the intersection between automorphism and isometry groups of these spaces. Finally it has shown that for any left invariant vector field, in a special case, the Lie group admits a left invariant Randers metric such that this intersection is a maximal compact subgroup of the group of automorphisms with respect to which the considered vector field is invariant.
\end{abstract}

%%% ----------------------------------------------------------------------
\maketitle
%%% ----------------------------------------------------------------------
%\tableofcontents
\section{Introduction}\label{Intro}
Randers metrics are interesting Finsler metrics which have found many applications in theoretical physics
and biology (for some applications of Finsler metrics see \cite{AnInMa, As, BaChSh}.). On the other hand, since
Randers metrics arise from Riemannian metrics and vector fields (or 1-forms),
working on them are simpler than Finsler metrics in general case. Because of the construction of Randers
metrics, it seems that many properties obtained in Riemannian case can be extended to this case with some
conditions.\\
An attractive class of structures in differential geometry is the family of invariant structures on Lie groups
with respect to the action of Lie groups on themselves. Invariant Riemannian metrics on Lie groups belong to this
family. These metrics have been studied by many mathematicians because the algebraic properties of a Lie group can impress on the geometric properties of it. They have found many significant results and examples on these spaces (for example see \cite{KoNo,Mi,No}.). Recently, many studies have been accomplished to extend these properties to Finsler spaces. On the other hand the algebraic structure of Lie groups can help us to find simple formulas for some complicated quantities related to Finsler spaces such as flag curvature (for example see \cite{DeHo1, DeHo2, EsSa1,EsSa2, Sa1, Sa2, Sa3, Sa4, Sa5}.).\\
In this article we study the intersection between automorphism and isometry groups of some Randers spaces. We consider left invariant Randers metrics which arise from left invariant Riemannian metrics and left invariant vector fields on
simply connected Lie groups.

\section{Preliminaries}\label{Prelim}

Let $G$ be a simply connected Lie group with Lie algebra
$\frak{g}$. Then we can define the following continuous
homomorphism:
\begin{equation}\label{T}
    \left\{%
\begin{array}{ll}
    T:Aut(G)\longrightarrow Aut(\frak{g}), \\
    \phi\longrightarrow T(\phi)=(d\phi)_e:T_eG=\frak{g}\longrightarrow\frak{g}.\\
\end{array}%
\right.
\end{equation}
Since $G$ is simply connected therefore $T$ is an isomorphism of
Lie groups (see \cite{Eb}). Also we have $K$ is a maximal compact
subgroup of $Aut(G)$ if and only if $T(K)$ is a maximal compact
subgroup of $Aut(\frak{g})$.\\

A Finsler metric on a manifold $M$ is a non-negative function
$F:TM\longrightarrow\Bbb{R}$ with the following properties:
\begin{enumerate}
    \item $F$ is smooth on the slit tangent bundle
    $TM^0:=TM\setminus\{0\}$,
    \item $F(x,\lambda Y)=\lambda F(x,Y)$ for any $x\in M$,
    $Y\in T_xM$ and $\lambda >0$,
    \item the $n\times n$ Hessian matrix $[g_{ij}]=[\frac{1}{2}\frac{\partial^2 F^2}{\partial y^i\partial
    y^j}]$ is positive definite at every point $(x,Y)\in TM^0$.
\end{enumerate}
A special type of a Finsler metrics is a Randers metric which has
been introduced by G. Randers in 1941 \cite{Ra}. Randers metrics
are constructed by using Riemannian metrics and vector fields (1-forms).\\
Let $g$ and $X$ be a Riemannian metric and a vector field on a
manifold $M$ respectively such that $\|X\|=\sqrt{g(X,X)}<1$. Then
a Randers metric $F$ can be defined by $g$ and $X$ as follows:
\begin{eqnarray}\label{Randers}
% \nonumber to remove numbering (before each equation)
     F(x,Y)=\sqrt{g(x)(Y,Y)}+g(x)(X(x),Y), \ \ \ \ \forall x\in M,
     Y\in T_xM.\label{IRM}
\end{eqnarray}
In this paper, we denote an inner product on $\frak{g}$ and also the
corresponding left invariant Riemannian metric on $G$ by  $< , >$.\\

\section{Automorphism and Isometry Groups}

Suppose that $\phi\in Aut(G)$, then for any $g\in G$ we have:
\begin{equation}
    \phi\circ L_g=L_{\phi(g)}\circ\phi.
\end{equation}
Now let $X\in\frak{g}$ be a left invariant vector field such that
$<X,X><1$. We define a Randers metric on $G$ by using the
Riemannian metric $< , >$ and the left invariant vector field $X$ as
follows:
\begin{equation}\label{F}
    F(g,Y_g)=\sqrt{<Y_g,Y_g>}+<X_g,Y_g> \forall g\in G, Y_g\in
    T_gG.
\end{equation}
Obviously $F$ is left invariant as
\begin{eqnarray}
  F(L_gh,dL_gY_h) &=& \sqrt{<dL_gY_h,dL_gY_h>}+<X_{L_gh},dL_gY_h> \nonumber\\
  &=&\sqrt{<Y_h,Y_h>}+<dL_gX_h,dL_gY_h> \\
  &=&\sqrt{<Y_h,Y_h>}+<X_h,Y_h>=F(h,Y_h).\nonumber
\end{eqnarray}

\begin{defn}
For an isomorphism $\phi\in Aut(G)$ and a vector field $X$ on $G$
we say that $X$ is $\phi$-invariant if $d\phi\circ X=X\circ\phi$.
\end{defn}

\begin{prop}\label{Iso}
Suppose that $F$ is as above. Let $I(G,<,>)$ and $I(G,F)$ denote
the isometry groups of Riemannian manifold $(G,<,>)$ and Randers
manifold $(G,F)$ respectively. Suppose that $\phi\in I(G,<,>)$,
then $\phi\in I(G,F)$ if and only if $X$ is $\phi$-invariant.
\end{prop}

\begin{proof} Let $X$ be $\phi$-invariant therefore for any $g\in
G$ we have $X_{\phi(g)}=d\phi_gX_g$. Then
\begin{eqnarray}
% \nonumber to remove numbering (before each equation)
  F(\phi(g),d\phi_gY_g) &=& \sqrt{<d\phi_gY_g,d\phi_gY_g>}+<X_{\phi(g)},d\phi_gY_g>\nonumber\\
  &=& \sqrt{<Y_g,Y_g>}+<d\phi_gX_g,d\phi_gY_g>\\
  &=&\sqrt{<Y_g,Y_g>}+<X_g,Y_g>=F(g,Y_g).\nonumber
\end{eqnarray}
Conversely let $\phi\in I(G,F)$, therefore we have:
\begin{eqnarray}
% \nonumber to remove numbering (before each equation)
   \sqrt{<Y_g,Y_g>}+<X_g,Y_g>&=&F(g,Y_g)=F(\phi(g),d\phi_gY_g)\\
   &=&
   \sqrt{<d\phi_gY_g,d\phi_gY_g>}+<X_{\phi(g)},d\phi_gY_g>\nonumber,
\end{eqnarray}
which shows that
\begin{equation}
    <d\phi_gX_g-X_{\phi(g)},Y_g>=0.
\end{equation}
Therefore we have $d\phi_gX_g=X_{\phi(g)}$.
\end{proof}
\begin{cor}
Let $X$ be $\phi$-invariant for any $\phi\in I(G,<,>)$. Then
$I(G,<,>)=I(G,F)$.
\end{cor}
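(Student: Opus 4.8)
The plan is to establish the equality $I(G,<,>)=I(G,F)$ by proving the two inclusions separately, and only one of them will actually use the standing hypothesis that $X$ is $\phi$-invariant for every $\phi\in I(G,<,>)$.

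For the inclusion $I(G,<,>)\subseteq I(G,F)$, I would argue directly from the preceding proposition. Given any $\phi\in I(G,<,>)$, the hypothesis guarantees that $X$ is $\phi$-invariant, and then Proposition \ref{Iso} immediately yields $\phi\in I(G,F)$. This direction is essentially a restatement of that proposition applied to every element of $I(G,<,>)$, so no new computation is required.

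The substantive direction is $I(G,F)\subseteq I(G,<,>)$, and this is where the main idea lies. The point is that the Riemannian metric can be recovered from the Randers metric by symmetrization: since $F(g,Y_g)=\sqrt{<Y_g,Y_g>}+<X_g,Y_g>$, replacing $Y_g$ by $-Y_g$ and adding gives
\[
F(g,Y_g)+F(g,-Y_g)=2\sqrt{<Y_g,Y_g>},
\]
so the even part of $F$ is exactly the Riemannian norm while the odd part $<X_g,Y_g>$ cancels. Now for $\phi\in I(G,F)$ I would use the linearity of $d\phi_g$, which gives $d\phi_g(-Y_g)=-d\phi_g Y_g$, together with the isometry identity $F(\phi(g),d\phi_g Y_g)=F(g,Y_g)$ applied to both $Y_g$ and $-Y_g$. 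Adding these two identities and invoking the displayed formula produces $<d\phi_g Y_g,d\phi_g Y_g>=<Y_g,Y_g>$ for all $g$ and $Y_g$.

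Finally, from $<d\phi_g Y_g,d\phi_g Y_g>=<Y_g,Y_g>$ I would conclude, by polarization of the inner product, that $d\phi_g$ preserves $<,>$ at every point, whence $\phi\in I(G,<,>)$. Combining the two inclusions gives the claimed equality. The main obstacle, such as it is, is recognizing the symmetrization trick that separates the Riemannian and one-form parts of $F$; once that is in hand the rest is routine linear algebra. It is worth noting that this reverse inclusion holds with no assumption on $X$ whatsoever, so the role of the hypothesis is solely to force the forward inclusion through Proposition \ref{Iso}.
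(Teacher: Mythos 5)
Your proof is correct, and it differs from the paper's in one substantive place. The inclusion $I(G,<,>)\subseteq I(G,F)$ is handled exactly as in the paper: the standing hypothesis supplies $\phi$-invariance of $X$, and Proposition \ref{Iso} does the rest. For the reverse inclusion $I(G,F)\subseteq I(G,<,>)$, however, the paper does not argue at all --- it simply cites Proposition 1.3 of \cite{DeHo2} --- whereas you prove it from scratch via the symmetrization identity
\[
F(g,Y_g)+F(g,-Y_g)=2\sqrt{<Y_g,Y_g>},
\]
applying the isometry condition at both $Y_g$ and $-Y_g$, using linearity of $d\phi_g$, and then polarizing to pass from norm preservation to preservation of the inner product. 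This argument is valid: the odd part $<X_g,Y_g>$ cancels, the even part recovers the Riemannian norm, and no hypothesis on $X$ is used, exactly as you observe. In effect you have reproved the special case of the Deng--Hou result that the paper imports, which makes your version self-contained where the paper leans on an external reference. The one caveat is that your computation (like the paper's own proof of Proposition \ref{Iso}) takes an isometry of $(G,F)$ to mean a diffeomorphism satisfying $F(\phi(g),d\phi_gY_g)=F(g,Y_g)$; if $I(G,F)$ were instead defined as the group of distance-preserving maps, one would need the Finslerian Myers--Steenrod theorem (also due to Deng and Hou) to know such maps are smooth and preserve $F$. Since the paper adopts the same differential-geometric convention throughout, your proof stands on equal footing with the original, trading a citation for an elementary two-line computation.
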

\begin{proof}
Since for every $\phi\in I(G,<,>)$ $X$ is $\phi$-invariant the
above proposition says that $I(G,<,>)\subset I(G,F)$. On the other
hand proposition 1.3 of \cite{DeHo2} says that $I(G,F)\subset
I(G,<,>)$, therefore the proof is completed.
\end{proof}
\begin{defn}
Suppose that a Lie group $G$ acts from the left (right) on a
manifold $M$, in other words let $M$ be a $G$-manifold. We say
that a metric (Finsler or Riemann) on $M$ is absolutely left
(right) invariant if $I(M)=G$, where $I(M)$ is the isometry group
of $M$ with respect to the metric.
\end{defn}

\begin{cor}
Let $<,>$ be an absolutely left invariant Riemannian metric on a
Lie group $G$ and $X$ be a left invariant vector field on $G$ such
that $<X,X><1$. Then $I(G,<,>)=I(G,F)=G$, where $F$ is defined by
$<,>$ and $X$.
\end{cor}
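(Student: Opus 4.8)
The plan is to reduce the whole statement to the first Corollary by verifying its single hypothesis. First I would unpack the meaning of absolute left invariance. Here $G$ acts on itself by left translations, so the assumption that $<,>$ is absolutely left invariant says exactly that $I(G,<,>)=G$, where $G$ is identified with the group of left translations. In particular every isometry $\phi\in I(G,<,>)$ is a left translation $\phi=L_g$ for some $g\in G$.

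The key step is then to check that $X$ is $\phi$-invariant for every $\phi\in I(G,<,>)$. By the first step this amounts to checking the condition for each left translation $\phi=L_g$. Writing out the defining relation $d\phi\circ X=X\circ\phi$ for $\phi=L_g$ gives $dL_g\circ X=X\circ L_g$, i.e. $dL_gX_h=X_{gh}$ for all $h\in G$. But this is precisely the statement that $X$ is a left invariant vector field, which holds by hypothesis. Hence $X$ is $L_g$-invariant for every $g\in G$, and therefore $\phi$-invariant for every $\phi\in I(G,<,>)$.

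Having verified the hypothesis, I would apply the first Corollary directly to conclude $I(G,<,>)=I(G,F)$. Combining this equality with $I(G,<,>)=G$ from the first step yields $I(G,F)=G$, and the three-term chain $I(G,<,>)=I(G,F)=G$ follows at once.

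I do not anticipate any genuine obstacle. The only point demanding a moment's care is the identification in the middle paragraph, namely recognizing that the $\phi$-invariance of $X$ under a left translation $L_g$ is literally the left invariance of $X$ that is already assumed; once this is observed, the argument is an immediate specialization of the first Corollary to the absolutely left invariant setting.
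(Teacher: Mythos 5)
Your proof is correct and follows exactly the route the paper intends: the paper states this corollary without proof as an immediate consequence of the preceding corollary, since absolute left invariance gives $I(G,<,>)=G$ (all isometries are left translations) and left invariance of $X$ is precisely $L_g$-invariance for every $g\in G$. Your observation that $d L_g\circ X=X\circ L_g$ is literally the left invariance hypothesis is the one point of substance, and you handle it correctly.
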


\begin{prop}
Let $<,>$ be any left invariant Riemannian metric on a Lie group
$H$. Also suppose that $F$ is a left invariant Randers metric
introduced by $<,>$ and a vector field $X$. Then $X$ is a left
invariant vector field.
\end{prop}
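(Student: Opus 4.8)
The plan is to exploit the left invariance of $F$ together with the left invariance of the Riemannian metric $<,>$ so as to isolate the linear term $<X,\cdot>$ and thereby force $X$ to satisfy the defining relation of a left invariant vector field.

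First I would write out what left invariance of $F$ means. For arbitrary $a,g\in H$ and $Y_g\in T_gH$, the assumption that $F$ is left invariant reads
\[
F(L_ag,dL_aY_g)=F(g,Y_g).
\]
Expanding both sides by the definition (\ref{F}) gives
\[
\sqrt{<dL_aY_g,dL_aY_g>}+<X_{L_ag},dL_aY_g>=\sqrt{<Y_g,Y_g>}+<X_g,Y_g>.
\]
The next step is to cancel the radical terms. Since $<,>$ is itself left invariant, $<dL_aY_g,dL_aY_g>=<Y_g,Y_g>$, so the two square roots coincide and the identity collapses to the purely linear relation
\[
<X_{L_ag},dL_aY_g>=<X_g,Y_g>.
\]

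Then I would rewrite the right-hand side once more using left invariance of the metric, namely $<X_g,Y_g>=<dL_aX_g,dL_aY_g>$. Subtracting yields
\[
<X_{L_ag}-dL_aX_g,\,dL_aY_g>=0
\]
for every $Y_g\in T_gH$. Because $dL_a\colon T_gH\to T_{L_ag}H$ is a linear isomorphism, the vectors $dL_aY_g$ range over all of $T_{L_ag}H$ as $Y_g$ ranges over $T_gH$; combined with the nondegeneracy of the inner product $<,>$, this forces $X_{L_ag}=dL_aX_g$ for all $a,g\in H$. This relation is exactly the defining property of a left invariant vector field, so $X$ is left invariant, as claimed.

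The only point requiring care — though it is not really an obstacle — is the passage from the scalar identity to the equality of vectors: one must observe that $dL_aY_g$ exhausts the tangent space $T_{L_ag}H$ and only then invoke nondegeneracy. No curvature or structural hypotheses on $H$ are needed, and the computation closely parallels the one already carried out in the proof of Proposition \ref{Iso}.
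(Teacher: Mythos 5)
Your proposal is correct and follows essentially the same argument as the paper: expand the left invariance of $F$, cancel the radical terms using the left invariance of $<,>$, rewrite the remaining linear terms over a common tangent space, and invoke nondegeneracy to conclude $X_{L_ag}=dL_aX_g$. The only difference is cosmetic — the paper pulls back by $dL_{g^{-1}}$ to compare vectors in $T_gH$, while you push forward to $T_{L_ag}H$ and note explicitly that $dL_a$ is an isomorphism — and your added care at that step is a mild improvement in rigor, not a different proof.
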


\begin{proof}
For any $g,h\in G$ we have
\begin{eqnarray}
   F(h,Y_h)&=&\sqrt{<Y_h,Y_h>}+<X_h,Y_h>\nonumber\\
   &=&\sqrt{<dL_gY_h,dL_gY_h>}+<X_{L_gh},dL_gY_h>\\
   &=&F(L_gh,dL_gY_h),\nonumber
\end{eqnarray}
which shows that
\begin{equation}
   <X_h-dL_{g^{-1}}X_{L_gh},Y_h>=0,
\end{equation}
therefore $X$ is left invariant.
\end{proof}
\begin{defn}
Let $X$ be a left invariant vector field on the simply connected Lie group $G$.
Then we define the following subgroups of $Aut(G)$ and $Aut(\frak{g})$:
\begin{eqnarray}
% \nonumber to remove numbering (before each equation)
  Aut_X(G)&=&\{\phi\in Aut(G)| \mbox{$X$ is $\phi$-invariant}\} \\
  Aut_X(\frak{g})&=&\{d\phi_e\in Aut(\frak{g})|
  \mbox{$X_e$ is an eigenvector of $d\phi_e$} \\
  && \ \ \ \ \ \mbox{with respect to eigenvalue $1$. i.e. $d\phi_eX_e=X_e$}\}\nonumber.
\end{eqnarray}
Where $e$ is the unit element of $G$.
\end{defn}

\begin{prop}
Suppose that $G$ is a Lie group with Lie algebra $\frak{g}$. Also let $<,>$ be a left
invariant Riemannian metric on $G$ and $X$ be a left invariant vector field such that $<X,X><1$.
If $\phi\in Aut(G)$ such that $T(\phi)=d\phi_e\in Aut_X(\frak{g})$ is a linear isometry of $\frak{g}$
with respect to the inner product $<,>$, then $\phi\in I(G,F)$, where $F$ is the Randers
metric defined by $<,>$ and $X$.
\end{prop}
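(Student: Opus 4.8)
The plan is to push everything down to the Lie algebra level by left invariance and then to invoke Proposition \ref{Iso}. The key identity is obtained by differentiating the automorphism relation $\phi\circ L_g=L_{\phi(g)}\circ\phi$ at the unit element $e$, which gives
\[
d\phi_g\circ (dL_g)_e=(dL_{\phi(g)})_e\circ d\phi_e
\]
as linear maps from $\frak{g}=T_eG$ to $T_{\phi(g)}G$. This identity is the crux of the argument: it expresses the differential $d\phi_g$ at an arbitrary point as a conjugate of the single map $d\phi_e$ by left translations, so that the two hypotheses on $d\phi_e=T(\phi)$, namely that it is a linear isometry of $(\frak{g},<,>)$ and that it fixes $X_e$, can be transported to every tangent space.

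Granting this identity, I would first show that $\phi\in I(G,<,>)$. Given $Y_g\in T_gG$, write $Y_g=(dL_g)_eU$ with $U=(dL_{g^{-1}})_gY_g\in\frak{g}$; the key identity then gives $d\phi_gY_g=(dL_{\phi(g)})_e(d\phi_eU)$. Using left invariance of $<,>$ together with the assumption that $d\phi_e$ is a linear isometry of $\frak{g}$, one computes
\[
<d\phi_gY_g,d\phi_gY_g>=<d\phi_eU,d\phi_eU>=<U,U>=<Y_g,Y_g>,
\]
so the Riemannian length is preserved, and hence (by polarization) so is the full metric tensor. Thus $\phi$ is a Riemannian isometry.

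Next I would check that $X$ is $\phi$-invariant. Since $X$ is left invariant, $X_g=(dL_g)_eX_e$, so by the key identity $d\phi_gX_g=(dL_{\phi(g)})_e(d\phi_eX_e)$. Because $d\phi_e\in Aut_X(\frak{g})$ we have $d\phi_eX_e=X_e$, and left invariance of $X$ then yields $d\phi_gX_g=(dL_{\phi(g)})_eX_e=X_{\phi(g)}$, which is precisely the condition $d\phi\circ X=X\circ\phi$ defining $\phi$-invariance.

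Having established both that $\phi$ is a Riemannian isometry and that $X$ is $\phi$-invariant, Proposition \ref{Iso} applies directly and gives $\phi\in I(G,F)$, completing the argument. I do not expect any genuine obstacle here: the only delicate point is the bookkeeping with left translations in deriving and applying the key identity, after which both conditions on $d\phi_e$ propagate to all of $G$ almost automatically. One could equally bypass Proposition \ref{Iso} and substitute the two facts directly into the defining formula \eqref{F} for $F$, but invoking the proposition is shorter.
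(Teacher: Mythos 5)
Your proposal is correct and follows essentially the same route as the paper: the paper likewise differentiates $\phi\circ L_g=L_{\phi(g)}\circ\phi$ at $e$ to transport the two hypotheses on $d\phi_e$ to every point, establishing both $\phi$-invariance of $X$ and $\phi\in I(G,<,>)$. The only (cosmetic) difference is the final step, where the paper substitutes directly into the formula for $F$ instead of citing Proposition \ref{Iso}, exactly the alternative you mention at the end.
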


\begin{proof}
From the fact that $\phi$ is a homomorphism, it follows immediately that for any $g\in G$
$\phi\circ L_g=L_{\phi(g)}\circ\phi$. Therefore
\begin{eqnarray*}
  d\phi_gX_g&=&d(\phi\circ L_g)_eX_e\\
  &=&d(L_{\phi(g)}\circ\phi)_eX_e\\
  &=&(dL_{\phi(g)})_e(d\phi_eX_e)\\
  &=&(dL_{\phi(g)})_eX_e=X_{\phi(g)}.
\end{eqnarray*}
So we have $\phi\in Aut_X(G)$. On the other hand we have
\begin{eqnarray*}
  <(d\phi)_gY_g,(d\phi)_gZ_g> &=& <(d\phi)_g(dL_g)_eY_e,(d\phi)_g(dL_g)_eZ_e>\\
  &=&<(dL_{\phi(g)})_e(d\phi)_eY_e,(dL_{\phi(g)})_e(d\phi)_eZ_e> \\
  &=&<Y_e,Z_e> = <Y_g,Z_g>,
\end{eqnarray*}
therefore $\phi\in I(G,<,>)$. Now we have
\begin{eqnarray*}
% \nonumber to remove numbering (before each equation)
  F(\phi(g),(d\phi)_gY_g) &=& \sqrt{<(d\phi)_gY_g,(d\phi)_gY_g>}+<X_{\phi(g)},(d\phi)_gY_g> \\
   &=& \sqrt{<Y_g,Y_g>}+<(d\phi)_gX_g,(d\phi)_gY_g>\\
   &=&\sqrt{<Y_g,Y_g>}+<X_g,Y_g>\\
   &=&F(g,Y_g),
\end{eqnarray*}
hence $\phi\in I(G,F)$.
\end{proof}

\begin{cor}
The isomorphism $T:Aut(G)\longrightarrow Aut(\frak{g})$ maps \\
$K=Aut_X(G)\bigcap I(G,F)$ isomorphically onto $K'=Aut_X(\frak{g})\bigcap O(\frak{g})$, where $O(\frak{g})$ denotes the orthogonal group of $\{\frak{g},<,>\}$.
\end{cor}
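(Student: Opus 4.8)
\emph{Proof proposal.} The plan is to exploit the fact, established earlier in the excerpt, that $T:Aut(G)\to Aut(\frak{g})$ is already an isomorphism of Lie groups. Consequently, to prove the corollary it suffices to establish the set-theoretic equality $T(K)=K'$: once this is known, the restriction $T|_K$ is an injective homomorphism (inherited from $T$) whose image is exactly $K'$, hence an isomorphism of $K$ onto $K'$. Note that $K'$ is visibly a subgroup, being the intersection of the stabilizer $Aut_X(\frak{g})$ of $X_e$ with the orthogonal group $O(\frak{g})$, and that $K$ is a subgroup because $Aut_X(G)=T^{-1}(Aut_X(\frak{g}))$ while $I(G,F)$ is a subgroup. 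So the whole argument reduces to the two inclusions $T(K)\subseteq K'$ and $K'\subseteq T(K)$.

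First I would prove $T(K)\subseteq K'$. Let $\phi\in K=Aut_X(G)\cap I(G,F)$. Since $\phi$ is an automorphism it fixes the identity, $\phi(e)=e$, and since $X$ is $\phi$-invariant we have $d\phi_gX_g=X_{\phi(g)}$ for all $g$; evaluating at $g=e$ gives $d\phi_eX_e=X_{\phi(e)}=X_e$, so $T(\phi)=d\phi_e\in Aut_X(\frak{g})$. To see that $d\phi_e\in O(\frak{g})$, I would invoke Proposition 1.3 of \cite{DeHo2}, already used above, which gives $I(G,F)\subseteq I(G,<,>)$; thus $\phi$ is a Riemannian isometry of $(G,<,>)$. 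Writing the isometry condition $<d\phi_gY,d\phi_gZ>=<Y,Z>$ at $g=e$, and recalling that the left invariant metric coincides with the inner product $<,>$ on $\frak{g}=T_eG$ while $\phi(e)=e$, we obtain $<d\phi_eY,d\phi_eZ>=<Y,Z>$, i.e. $d\phi_e\in O(\frak{g})$. Hence $T(\phi)\in Aut_X(\frak{g})\cap O(\frak{g})=K'$.

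Next I would prove the reverse inclusion $K'\subseteq T(K)$, and here the preceding proposition does essentially all the work. Let $\psi\in K'=Aut_X(\frak{g})\cap O(\frak{g})$. Since $T$ is an isomorphism there is a unique $\phi\in Aut(G)$ with $T(\phi)=d\phi_e=\psi$. By hypothesis $d\phi_e\in Aut_X(\frak{g})$ is a linear isometry of $\frak{g}$, so the previous proposition applies and yields $\phi\in I(G,F)$; moreover its proof shows along the way that $d\phi_gX_g=X_{\phi(g)}$ for every $g$, i.e. $\phi\in Aut_X(G)$. Therefore $\phi\in Aut_X(G)\cap I(G,F)=K$ and $\psi=T(\phi)\in T(K)$. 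Combining the two inclusions gives $T(K)=K'$, which completes the argument.

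The only genuinely non-elementary ingredient is the passage from $\phi\in I(G,F)$ to $d\phi_e\in O(\frak{g})$ in the first inclusion; this is where the external fact $I(G,F)\subseteq I(G,<,>)$ of \cite{DeHo2} is indispensable, since membership in the Finsler isometry group does not by itself obviously control the linear behaviour of $d\phi_e$ with respect to $<,>$. The remaining steps are the stabilizer/fixed-point bookkeeping for $Aut_X$ and the evaluation of the invariance conditions at the identity, both of which are routine given the propositions already proved.
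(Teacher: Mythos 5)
Your proposal is correct and follows exactly the route the paper intends: the paper states this corollary without a written proof, as an immediate consequence of the preceding proposition (which supplies your inclusion $K'\subseteq T(K)$, including the step $\phi\in Aut_X(G)$), combined with evaluating the invariance conditions at $e$ and the fact $I(G,F)\subset I(G,<,>)$ from Proposition 1.3 of the Deng--Hou reference, already invoked earlier in the paper. Your write-up simply makes this implicit argument explicit, and it is sound.
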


\begin{cor}
If $Aut_X(\frak{g})\bigcap I(G,<,>)=Aut(\frak{g})\bigcap I(G,<,>)$ and
$K'=Aut_X(\frak{g})\bigcap I(G,<,>)$ then both $K'$ and $K=T^{-1}(K')$ are compact Lie groups because $K'$ is a closed subgroup of the compact
Lie group $O(\frak{g})$.
\end{cor}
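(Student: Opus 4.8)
The plan is to reduce the compactness of $K'$ to the elementary fact that a closed subgroup of a compact Lie group is compact, and then to transport this conclusion to $K$ along the isomorphism $T$.

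First I would fix the ambient compact group. The orthogonal group $O(\frak{g})$ of the finite-dimensional inner product space $(\frak{g},<,>)$ is a compact Lie group, so it suffices to exhibit $K'$ as a closed subgroup of it. The first thing to verify is the inclusion $K'\subseteq O(\frak{g})$. By hypothesis $K'=Aut_X(\frak{g})\cap I(G,<,>)$, and the computation already carried out in the preceding proposition shows that whenever $\phi\in Aut(G)$ is an isometry of $(G,<,>)$ its differential preserves $<,>$ at the identity, i.e. $d\phi_e\in O(\frak{g})$. Passing through $T$, this identifies the automorphism-isometries with orthogonal automorphisms, so that $Aut_X(\frak{g})\cap I(G,<,>)=Aut_X(\frak{g})\cap O(\frak{g})$; in particular $K'\subseteq O(\frak{g})$, which matches the $K'$ of the previous corollary.

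Next I would check that $K'$ is closed in $O(\frak{g})$. The subgroup $Aut_X(\frak{g})$ is cut out inside $Aut(\frak{g})$ by the single linear condition $d\phi_e X_e=X_e$, which defines a closed subset, and $Aut(\frak{g})$ is itself closed in $GL(\frak{g})$; intersecting with the closed subgroup $O(\frak{g})$ exhibits $K'$ as an intersection of closed sets, hence a closed subgroup of $O(\frak{g})$. Since a closed subgroup of the compact group $O(\frak{g})$ is compact, and since by Cartan's closed-subgroup theorem it is an embedded Lie subgroup, $K'$ is a compact Lie group. Finally I would transfer the result to $K$: recalling from the Preliminaries that $T:Aut(G)\longrightarrow Aut(\frak{g})$ is an isomorphism of Lie groups, its restriction is an isomorphism of $K=T^{-1}(K')$ onto $K'$, and being isomorphic to a compact Lie group, $K$ is itself a compact Lie group.

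As for the main obstacle, the only genuinely delicate point is the identification in the first step, namely that the group-level condition $\phi\in I(G,<,>)$ descends to the linear condition $d\phi_e\in O(\frak{g})$ and that this descent is compatible with $T$. Everything after that is the standard chain ``closed subgroup of a compact Lie group is a compact Lie group'' together with transport of structure along the isomorphism $T$, and I expect no difficulty there. I note that the hypothesis $Aut_X(\frak{g})\cap I(G,<,>)=Aut(\frak{g})\cap I(G,<,>)$ is not actually needed for compactness itself; it serves to identify $K'$ with the full intersection relevant to the Riemannian metric, and this identification follows from the same orthogonality observation used in the first step.
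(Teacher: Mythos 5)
Your proposal is correct and takes essentially the same route as the paper: the corollary carries no separate proof because its justification (a closed subgroup of the compact Lie group $O(\frak{g})$ is a compact Lie group, transported to $K$ along the isomorphism $T$) is embedded in the statement itself, and your argument simply fills in the details of exactly that reasoning, including the identification via the preceding proposition that automorphism-isometries have orthogonal differentials at $e$. Your side remark is also accurate: the hypothesis $Aut_X(\frak{g})\cap I(G,<,>)=Aut(\frak{g})\cap I(G,<,>)$ plays no role in the compactness argument and only serves to identify $K'$ with the group used in the subsequent results.
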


\begin{prop}\label{F-exist}
Let $X$ be a left invariant vector field on $G$ and $K$ be any compact subgroup of $Aut_X(G)$. Then there exists
a left invariant Randers metric $F$ on $G$ such that $K\subset Aut_X(G)\bigcap I(G,F)$, where $F$ is defined by $X$
and a left invariant Riemannian metric $<,>$ on $G$.
\end{prop}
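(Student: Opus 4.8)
The plan is to manufacture a left invariant Riemannian metric on $G$ that is invariant under the compact group $K$, and then feed this metric together with $X$ into the preceding proposition (the one asserting that $\phi\in I(G,F)$ whenever $T(\phi)\in Aut_X(\frak{g})$ is a linear isometry). Observe first that the inclusion $K\subset Aut_X(G)$ is part of the hypothesis, so that half of the desired conclusion $K\subset Aut_X(G)\bigcap I(G,F)$ is automatic; the entire burden is to produce $<,>$ so that $K\subset I(G,F)$.

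First I would transport the problem to the Lie algebra. Since $T:Aut(G)\longrightarrow Aut(\frak{g})$ is an isomorphism of Lie groups and $K$ is compact, $K':=T(K)$ is a compact subgroup of $Aut(\frak{g})$. Moreover, for each $\phi\in K\subset Aut_X(G)$ the $\phi$-invariance $d\phi\circ X=X\circ\phi$ evaluated at the unit $e$ (using $\phi(e)=e$) gives $d\phi_e X_e=X_e$, so in fact $K'\subset Aut_X(\frak{g})$. Next I would build a $K'$-invariant inner product by averaging: starting from an arbitrary inner product $(\ ,\ )_0$ on $\frak{g}$ and using the normalized Haar measure $\mu$ on the compact group $K'$, set
\[
    <u,v>_0 \;=\; \int_{K'} (\psi u,\psi v)_0\, d\mu(\psi).
\]
Invariance of Haar measure yields $<\rho u,\rho v>_0\,=\,<u,v>_0$ for every $\rho\in K'$, so each $d\phi_e$ with $\phi\in K$ is a linear isometry of $(\frak{g},<,>_0)$, i.e. lies in $O(\frak{g})$.

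To meet the Randers requirement I would finally rescale. For any constant $c>0$ the inner product $<,>\,:=\,c<,>_0$ is still $K'$-invariant, while $<X,X>\,=\,c<X,X>_0$; choosing $c$ small enough (any $c$ works when $X=0$, and any $c<1/\!<X,X>_0$ otherwise) guarantees $<X,X>\,<1$. Extending $<,>$ to a left invariant Riemannian metric on $G$ and defining $F$ from $<,>$ and $X$ through (\ref{F}), every $\phi\in K$ now satisfies the hypotheses of the preceding proposition, because $T(\phi)\in Aut_X(\frak{g})$ and $T(\phi)$ is a linear isometry of $\frak{g}$. That proposition then gives $\phi\in I(G,F)$, whence $K\subset I(G,F)$; combined with $K\subset Aut_X(G)$ this completes the proof.

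The point that deserves care is the compatibility of the two demands on $<,>$: $K$-invariance forces the averaged form upon us, yet the Randers construction insists on $<X,X>\,<1$, and a priori the averaged form might fail this bound. There is no genuine conflict, since scaling preserves $K'$-invariance while shrinking the norm of $X$, so the rescaling step reconciles the two requirements cleanly. Everything else reduces to the routine facts that Haar averaging produces an invariant inner product and that the preceding proposition applies verbatim once invariance and the norm bound are in place.
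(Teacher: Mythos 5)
Your proof is correct, and its skeleton mirrors the paper's: produce a $K$-invariant left invariant Riemannian metric, rescale it so that $<X,X><1$ (rescaling changes neither left invariance nor the isometry group), and then invoke an earlier proposition to pass from the Riemannian data to the Randers isometry group. The two proofs differ in how the key steps are discharged. For the existence of the invariant metric, the paper simply cites Eberlein's result that any compact subgroup $K\subset Aut(G)$ satisfies $K\subset Aut(G)\bigcap I(G,<,>_0)$ for some left invariant metric $<,>_0$; you reprove this fact from scratch by transporting $K$ to $K'=T(K)\subset Aut(\frak{g})$ and averaging an arbitrary inner product over $K'$ against Haar measure, which makes the argument self-contained and shows exactly where compactness enters. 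Consequently the concluding steps also differ: the paper, already knowing $K\subset Aut_X(G)\bigcap I(G,<,>)$ as isometries of the manifold, applies Proposition \ref{Iso}; you, knowing only the algebra-level facts that $T(\phi)$ fixes $X_e$ and is a linear isometry of $(\frak{g},<,>)$, instead apply the paper's proposition on automorphisms with $T(\phi)\in Aut_X(\frak{g})$ a linear isometry --- the right tool for your construction, since that proposition is precisely what lifts algebra-level invariance to an isometry of $(G,F)$, and your explicit observation that evaluating $d\phi\circ X=X\circ\phi$ at $e$ yields $d\phi_eX_e=X_e$ is the hinge that makes it applicable. Both routes are sound; the paper's is shorter given the citation, while yours trades brevity for a complete and more elementary argument.
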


\begin{proof}
Since $K$ is a compact subgroup of $Aut_X(G)\subset Aut(G)$ there exists a left invariant Riemannian
metric $<,>_0$ on $G$ such that $K\subset Aut(G)\bigcap I(G,<,>_0)$ (see \cite{Eb}.). Therefore
$K\subset Aut_X(G)\bigcap I(G,<,>_0)$. Now let $<,>=\frac{1}{N}<,>_0$ for some $N\in\Bbb{N}$ such that $<X,X><1$.
Obviously we have $I(G,<,>)=I(G,<,>_0)$. Now let $F$ be the Randers metric defined by $X$ and the Riemannian metric
$<,>$. Proposition \ref{Iso} says that $Aut_X(G)\bigcap I(G,<,>)=Aut_X(G)\bigcap I(G,F)$ which
completes the proof.
\end{proof}

\begin{prop}
Let $G$ be a simply connected Lie group, $X$ a left invariant vector field on $G$ and
$Aut_X(\frak{g})\bigcap I(G,<,>)=Aut(\frak{g})\bigcap I(G,<,>)$. Then there exists a left invariant
Randers metric $F$ on $G$ such that \\
$K=Aut_X(G)\bigcap I(G,F)$ is a maximal compact subgroup of $Aut_X(G)$.
\end{prop}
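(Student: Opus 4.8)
The plan is to transport the entire problem to the Lie algebra $\frak{g}$ through the isomorphism $T$ and reduce it to a single statement about inner products. Since $G$ is simply connected, $T:Aut(G)\to Aut(\frak{g})$ is an isomorphism of Lie groups, and because $\phi$-invariance of $X$ is equivalent to $d\phi_eX_e=X_e$ (this is exactly the computation $d\phi_gX_g=(dL_{\phi(g)})_e(d\phi_eX_e)$ and $X_{\phi(g)}=(dL_{\phi(g)})_eX_e$ carried out earlier), $T$ restricts to an isomorphism of $Aut_X(G)$ onto $Aut_X(\frak{g})$. Moreover, for any left invariant metric $<,>$ with $<X,X><1$ and its associated Randers metric $F$, the Corollary identifying $K$ and $K'$ gives $T(K)=Aut_X(\frak{g})\cap O(\frak{g})$, where $K=Aut_X(G)\cap I(G,F)$. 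As $T$ carries maximal compact subgroups to maximal compact subgroups, it thus suffices to exhibit an inner product $<,>$ on $\frak{g}$ for which $Aut_X(\frak{g})\cap O(\frak{g})$ is a maximal compact subgroup of $Aut_X(\frak{g})$.

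First I would check that $H:=Aut_X(\frak{g})$ admits a maximal compact subgroup at all. It is the stabilizer of the single vector $X_e$ inside $Aut(\frak{g})$, which is a real algebraic subgroup of $GL(\frak{g})$ (cut out by the bracket-preserving polynomial equations); hence $H$ is again a closed algebraic subgroup, in particular a Lie group with finitely many connected components. By the Cartan--Iwasawa--Malcev theorem it therefore possesses a maximal compact subgroup $C$, unique up to conjugacy.

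The heart of the argument is to choose the metric so that intersecting $H$ with the orthogonal group cuts out exactly $C$. Starting from an arbitrary inner product on $\frak{g}$, I would average it over $C$ against normalized Haar measure to produce a $C$-invariant inner product $<,>$. Then every element of $C$ is a linear isometry, so $C\subseteq H\cap O(\frak{g})$. Conversely $H$ is closed in $GL(\frak{g})$ and $O(\frak{g})$ is compact, so $H\cap O(\frak{g})$ is a compact subgroup of $H$; since it contains the maximal compact subgroup $C$, it must equal $C$. Hence $Aut_X(\frak{g})\cap O(\frak{g})=C$ is maximal compact in $Aut_X(\frak{g})$. Replacing $<,>$ by $\frac{1}{N}<,>$ for large $N\in\Bbb{N}$ arranges $<X,X><1$ without altering $O(\frak{g})$, and I let $F$ be the Randers metric defined by this $<,>$ and $X$, exactly as in Proposition \ref{F-exist}.

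For this metric the Corollary gives $T(K)=Aut_X(\frak{g})\cap O(\frak{g})=C$, which is maximal compact in $Aut_X(\frak{g})=T(Aut_X(G))$; applying $T^{-1}$ yields that $K=Aut_X(G)\cap I(G,F)$ is maximal compact in $Aut_X(G)$, as required. The hypothesis $Aut_X(\frak{g})\cap I(G,<,>)=Aut(\frak{g})\cap I(G,<,>)$ enters to identify this maximal compact group with $Aut(\frak{g})\cap O(\frak{g})$ and thereby to invoke the compactness statement of the earlier corollary; equivalently one may run the argument inside $Aut(\frak{g})$, choosing $<,>$ so that $Aut(\frak{g})\cap O(\frak{g})$ is maximal compact there and then using the hypothesis to conclude that this subgroup lies in---and hence is maximal compact in---$Aut_X(\frak{g})$. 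The main obstacle is precisely the metric construction of the third paragraph: one must guarantee both that $Aut_X(\frak{g})$ really has a maximal compact subgroup and that passing to $O(\frak{g})$ produces neither too small nor, crucially, too large a group, and it is the averaging-plus-maximality step that secures this.
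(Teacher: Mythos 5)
Your proof is correct and is essentially the paper's own argument: choose a maximal compact subgroup of $Aut_X$, manufacture an invariant left invariant metric by averaging (the paper outsources this step to Eberlein's theorem via Proposition \ref{F-exist}, while you carry out the Haar average over the compact group on $\frak{g}$ directly), rescale so that $<X,X><1$, and let maximality force the equality $K=Aut_X(G)\bigcap I(G,F)$. Your write-up additionally makes explicit two points that the paper's two-line proof leaves implicit: that $Aut_X(\frak{g})$, being a real algebraic subgroup of $GL(\frak{g})$, actually possesses a maximal compact subgroup, and that $Aut_X(\frak{g})\bigcap O(\frak{g})$ is compact, which is exactly what allows maximality to yield equality rather than mere containment.
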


\begin{proof}
Let $K$ be a maximal compact subgroup of $Aut_X(G)$. By proposition \ref{F-exist} there exists a left invariant Randers
metric on $G$ such that $K\subset Aut_X(G)\bigcap I(G,F)$. Equality holds by the maximality of $K$.
\end{proof}

% ------------------------------------------------------------------------
\end{document}